\newtheorem{theorem}{Theorem}[section]
\newtheorem{proposition}[theorem]{Proposition}
\newtheorem{lemma}[theorem]{Lemma}
\newtheorem{Not}[theorem]{Notation}
\newtheorem{rem}[theorem]{Remark}
\theoremstyle{definition}
\newtheorem{definition}[theorem]{Definition}
\newcommand\projective\mathbf
\newcommand\PP{\projective P}
\newcommand\OO{\mathcal O}
\newcommand\onto\twoheadrightarrow
\newcommand\lra\longrightarrow
\newcommand\dar\downarrow
\DeclareMathOperator{\Hom}{Hom}
\begin{document}

\title{A Cohomological criterion for the splitting of vector bundles on $\PP^{n_1}\times\cdots\times\PP^{n_s}$}
\author{Damian M Maingi}
\date{November, 28th 2025}
\keywords{Cohomological criterion, vector bundles, regularity of bundles}

\address{Department of Mathematics\\Catholic University of Eastern Africa\\P.O Box 30197, 00100 Nairobi, Kenya\\https://orcid.org/0000-0001-9267-9388}
\email{dmaingi@cuea.edu, dmaingi@gmail.com}

\begin{abstract}
In this paper we study the cohomological criterion for the splitting of vector bundles on multiprojective spaces
$\PP^{n_1}\times\ldots\times\PP^{n_s}$. We also give a generalization of vanishing cohomological criteria for vector bundles on 
$\PP^{n}\times\ldots\times\PP^{n}$.
\end{abstract}

\maketitle

\section{Introduction}

\noindent The goal of this paper is to study cohomological criterion for the splitting of vector bundles on multiprojective spaces
$\PP^{n_1}\times\ldots\times\PP^{n_s}$. The Horrocks theorem \cite{12} says that a vector bundle $E$ on a projective space $\mathbb{P}^n$ splits
into a direct sum of line bundles if and only if it has no intermediate cohomology that is $H^i(E(t))=0$ for all $1\leq{i}\leq{n-1}$ and $t\in\mathbb{Z}$.
This theorem gives a criterion for a vector bundle to be decomposable in to simpler components by checking the vanishing of its intermediate
cohomologies. We extend the results of Miyazaki in \cite{15} and \cite{16} where the author studied cohomological criterion for the splitting of vector bundles on 
the biprojective space $\PP^{n_1}\times\PP^{n_2}$ over an algebraically closed field $k$.
Ballico and Malaspina in \cite{4} studied regularity and splitting  conditions on multiprojective spaces. Maingi in \cite{14}
extended their results to $\PP^{n_1}\times\ldots\times\PP^{n_s}$.


\noindent The notion of regularity on a multiprojective spaces has been tackled by several authors for instance \cite{4,6,7,14,15,16,18}.
Ballico and Malaspina in \cite{4} gave modified notion of the Hoffman and Wang \cite{18} regularity on $\PP^n\times\PP^m$ and extended it to  multiprojective space 
$\PP^{n_1}\times\cdots\times\PP^{n_s}$. In this paper we extend the splitting criteria for vector bundles on a biprojective space $\PP^{n}\times\PP^{m}$  to a multiprojective space $\PP^{n_1}\times\cdots\times\PP^{n_s}$.

\begin{Not}
\noindent Our ambient space is $X=\PP^{n_1}\times\ldots\times\PP^{n_s}$.
For a coherent sheaf $E$ we denote by $E\otimes\OO_X(a_1, \cdots, a_s):= {p_1}^*\OO_{\PP^{n_1}}(a_1)\otimes\cdots\otimes{p_s}^*\OO_{\PP^{n_s}}(a_s)$
where $p_i$ are natural projections  from $X$ onto $\PP^{n_i}$, for all $i=1,\cdots,s$.
For the sake of brevity we shall use the notation $H^q(\mathscr{F})$ in place of $H^q(X,\mathscr{F})$.
\end{Not}

\noindent The main motivation for this work is derived from splitting criterion of vector bundles enuntiated by Horrocks in \cite{12}.
We extend the results of Miyazaki in \cite{14} from biprojective spaces $\PP^{n_1}\times\PP^{n_2}$ to multiprojective spaces
$\PP^{n_1}\times\cdots\times\PP^{n_s}$ in the following two theorems. The third theorem is a consequence of the second theorem.\\

\noindent The main results  of this paper are the following:

\begin{theorem}
 Let $E$ be a rank $r$ vector bundle on $X=\PP^{n_1}\times\cdots\times\PP^{n_s}$ with $n_1,\dots,n_s\geq2$.
Then $E$ is a direct sum of line bundles of
\begin{enumerate}
 \item $\OO_X$
  \item $\OO_X(0,\ldots,0,1)$, $\OO_X(0,\ldots,1,0),\ldots,\OO_X(1,0,\ldots,0)$
  \item $\OO_X(0,\ldots,0,2)$, $\OO_X(0,\ldots,2,0),\ldots,\OO_X(2,0,\ldots,0)$ 
\end{enumerate}
twisted by line bundles of the form $\OO_X(\ell,\ldots,\ell)$ if and only if
\[H^i(E(j_1+t,\ldots,j_s+t)= 0\] for all integers $i,j_1,\cdots,j_s$ and $t$ satisfying 
$1\leq i \leq n_1+\cdots+n_s-1$, $i\leq j_1+\cdots+j_s \leq0$, $-n_i\leq j_i\leq0$ and  for any $j = 1,\ldots,s$,
except for 
\begin{enumerate}
\renewcommand{\theenumi}{\alph{enumi}}
\item $(i,j_1,\ldots,j_s)=(n_i,-n_i,0,\ldots,0)$,$(0,n_i,-n_i,\ldots,0)\ldots(0,\ldots,0,n_i,-n_i)$
\item $(i,j_1,\ldots,j_s)=(n_i,-n_i+1,0,\ldots,0)$,$(0,n_i,-n_i+1,\ldots,0)\ldots(0,\ldots,0,n_i,-n_i+1)$
\end{enumerate}
\end{theorem}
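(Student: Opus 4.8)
The statement is a biconditional, and the plan is to treat the two implications separately: the necessity of the vanishing by a direct cohomological computation, and the sufficiency by induction on the number $s$ of factors together with a fibrewise application of Horrocks' theorem \cite{12}.

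For the necessity (the ``only if'' direction) I would compute the cohomology of each admissible summand directly. Every admissible line bundle has the shape $\OO_X(\ell+\delta_1,\ldots,\ell+\delta_s)$ with the vector $(\delta_1,\ldots,\delta_s)$ equal to $0$, a standard basis vector $e_k$, or twice a basis vector $2e_k$; after absorbing the uniform shift $t$ into $m:=\ell+t$, I must evaluate $H^i\big(\OO_X(m+\delta_1+j_1,\ldots,m+\delta_s+j_s)\big)$. By the K\"unneth formula this group is the direct sum, over decompositions $i=i_1+\cdots+i_s$, of tensor products $\bigotimes_k H^{i_k}(\PP^{n_k},\OO(m+\delta_k+j_k))$, and each factor is nonzero only when $i_k=0$ with $m+\delta_k+j_k\geq 0$, or $i_k=n_k$ with $m+\delta_k+j_k\leq -n_k-1$. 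Running through the prescribed range $1\leq i\leq n_1+\cdots+n_s-1$, $-n_k\leq j_k\leq 0$, I would check that the sign constraints force every contribution to vanish, the sole survivors being the top cohomology $H^{n_k}$ of a single factor; tracking where the degree-one and degree-two bumps $\delta_k$ push $m+\delta_k+j_k$ below $-n_k-1$ should reproduce exactly the two exceptional families (a) and (b).

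The sufficiency is the heart of the theorem and I would prove it by induction on $s$, the base case $s=1$ being Horrocks \cite{12} refined so that the prescribed vanishing pins down the degrees of the summands, and the case $s=2$ being Miyazaki \cite{15,16}. Writing $X=X'\times\PP^{n_s}$ with $X'=\PP^{n_1}\times\cdots\times\PP^{n_{s-1}}$ and $\pi\colon X\to X'$ the projection, I would first restrict $E$ to the fibres $\{x'\}\times\PP^{n_s}$: the hypothesised vanishing, read through K\"unneth and cohomology-and-base-change, kills the intermediate cohomology $H^{i_s}$ for $1\leq i_s\leq n_s-1$ of every such restriction, so by Horrocks each $E|_{\{x'\}\times\PP^{n_s}}$ splits into line bundles on $\PP^{n_s}$ whose degrees are confined to the admissible range by the quantitative part of the hypothesis. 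Semicontinuity then forces the splitting type to be constant, a relative Horrocks description realises $E$ (up to a filtration) as a sum $\bigoplus_j \pi^*F_j\otimes\OO_X(0,\ldots,0,a_j)$, and the sheaves $F_j$ on $X'$ are seen to inherit the analogous cohomological vanishing, so the inductive hypothesis applies. Throughout I would invoke the multiprojective regularity of Ballico--Malaspina \cite{4} and Maingi \cite{14} to control twists and to guarantee that the relevant direct images commute with restriction.

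The main obstacle I anticipate is the descent step: passing from a fibrewise splitting with constant splitting type to an honest global direct-sum decomposition of $E$ requires the vanishing of the relevant relative $\mathrm{Ext}^1$, equivalently the degeneration of the filtration coming from relative Horrocks, and it is here that the precise list of admissible degrees and the two exceptional families must be used to rule out nonsplit extensions. A cleaner but more technical alternative would be to bypass the induction and build a multigraded Beilinson-type monad on $X$ from a resolution of the diagonal: the prescribed vanishing annihilates all but finitely many $E_1$-terms of the associated spectral sequence, and one reads off the admissible summands and the exceptions as the surviving entries. Either way, the bookkeeping that matches the surviving cohomology to exactly the lines $\OO_X$, $\OO_X(\ldots,1,\ldots)$ and $\OO_X(\ldots,2,\ldots)$ twisted by $\OO_X(\ell,\ldots,\ell)$ is where the argument must be carried out with the greatest care.
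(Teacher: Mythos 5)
Your ``only if'' direction is essentially the paper's own argument: a direct K\"unneth computation on the admissible line bundles, tracking when a single factor contributes an $H^{n_k}$, and it is sound. The ``if'' direction, however, departs completely from the paper and contains a gap that your plan does not bridge. The paper does not induct on the number of factors and never restricts to fibres: following Miyazaki, it takes the minimal $t$ for which $E(t,\ldots,t)$ is $0$-regular, and then, according to which group obstructs $(t-1,\ldots,t-1)$-regularity (the top cohomology $H^{n_1+\cdots+n_s}$, or an $H^{n_1}$ in one of three configurations), it uses Koszul complexes pulled back from the factors to build surjections $H^0(E(\ast))\onto H^{n_1}(E(\ast))$, Serre duality to produce the dual section, and the cup-product pairing into $H^{n_1+n_k}(\OO_X(-n_1-1,\ldots,-n_k-1,\ldots))$ to show that the composite $\OO_X(\delta)\to E(t,\ldots,t)\to\OO_X(\delta)$ is an isomorphism; this splits off one line bundle at a time, implicitly inducting on the rank rather than on $s$.

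The gap in your plan is its very first step: you claim the hypothesis, ``read through K\"unneth and cohomology-and-base-change,'' kills the intermediate cohomology of every restriction $E|_{\{x'\}\times\PP^{n_s}}$. The hypothesis only provides vanishing for twists lying in a diagonal slab, namely $(j_1+t,\ldots,j_s+t)$ with $-n_i\le j_i\le 0$ and $t\in\ZZ$ arbitrary, whereas Horrocks on a fibre needs $H^{i_s}\bigl(E|_{\{x'\}\times\PP^{n_s}}(m)\bigr)=0$ for \emph{all} $m\in\ZZ$. K\"unneth cannot supply this, since it computes cohomology of external products $\mathscr{F}\boxtimes\mathscr{G}$, not of an arbitrary bundle on a product; and base change is circular here, since it presupposes control of $R^q\pi_*\bigl(E(0,\ldots,0,m)\bigr)$ for all $m$, which is exactly the unknown. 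Concretely, if you compute the fibre cohomology by tensoring $E(0,\ldots,0,m)$ with the Koszul resolution of $\OO_{\{x'\}\times\PP^{n_s}}$, the groups you need are of the form $H^{i_s+p}\bigl(E(-c_1,\ldots,-c_{s-1},m)\bigr)$, and as soon as $|m|$ is large these twists leave the slab where your hypothesis says anything, so nothing forces them to vanish; the fibrewise splitting is only known a posteriori, once the theorem itself is proved. Even granting that step, the second obstacle you flag yourself --- splitting the relative-Horrocks filtration, i.e.\ the vanishing of the relevant relative $\mathrm{Ext}^1$, and the bookkeeping of how the exceptional families (a), (b) enter --- is left entirely open. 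The paper's pairing argument avoids both issues, which is precisely why it proceeds summand by summand on $X$ rather than fibrewise; your Beilinson-monad alternative is a legitimate known route (it is the Costa--Mir\'o-Roig method), but it too is only sketched, not carried out.
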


\begin{theorem}
 Let $E$ be a rank $r$ vector bundle on $X=\PP^{n_1}\times\cdots\times\PP^{n_s}$ with $n_1,\dots,n_s\geq1$.
 Let $r_1,\dots,r_s$ be integers such that $0\leq r_i\leq n_i$ for all $i$. Then the vector bundle $E$ is a direct sum of line bundles of
\begin{enumerate}
 \item $\OO_X$
 \item $\OO_X(0,\ldots,0,1),\ldots,\OO_X(0,0,\ldots,r_s)$
 \item $\OO_X(1,\ldots,0),\ldots,\OO_X(r_s,0,\ldots,0)$ 
\end{enumerate}
twisted by line bundles of the form $\OO_X(\ell,\ldots,\ell)$ if and only if
\[H^i(E(j_1+t,\ldots,j_s+t)= 0\] for all integers $i,j_1,\cdots,j_s$ and $t$ satisfying 
$1\leq i \leq n_1+\cdots+n_s-1$, $i\leq j_1+\cdots+j_s \leq0$, $-n_i\leq j_i\leq0$ and  for any $j = 1,\ldots,s$,
except for 
\begin{enumerate}
\renewcommand{\theenumi}{\alph{enumi}}
\item $i=n_1$ and $j_k\geq j_1+n_1-r_1+1$
\item $i=n_k$ and $j_k\leq j_1-n_k+r_k-1$
\end{enumerate}
\end{theorem}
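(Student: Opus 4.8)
\emph{Proof strategy.}

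The plan is to prove both implications, handling the \textbf{only if} direction by a direct cohomology computation and the \textbf{if} direction by induction on the number of factors $s$, with a relative Beilinson resolution of the diagonal as the main engine. For the only if direction, suppose $E$ is a direct sum of the listed line bundles twisted by $\OO_X(\ell,\dots,\ell)$. Each summand has the form $\OO_X(b_1,\dots,b_s)$ with the $b_k$ confined to a small window, and by the K\"unneth formula
\[
H^i\big(\OO_X(b_1,\dots,b_s)\big)=\bigoplus_{i_1+\cdots+i_s=i} H^{i_1}\!\big(\OO_{\PP^{n_1}}(b_1)\big)\otimes\cdots\otimes H^{i_s}\!\big(\OO_{\PP^{n_s}}(b_s)\big).
\]
Using that $H^{i_k}(\OO_{\PP^{n_k}}(b_k))$ is nonzero only for $i_k=0$ (when $b_k\ge 0$) or $i_k=n_k$ (when $b_k\le -n_k-1$), one checks that $H^i(E(j_1+t,\dots,j_s+t))$ vanishes throughout the stated range and that the only surviving classes occur exactly at the two excepted families (a) and (b). This both forces the required vanishing and records the reverse bookkeeping used below.

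For the if direction I argue by induction on $s$. The base case $s=1$ is \emph{exactly} Horrocks' theorem \cite{12}: the range conditions reduce to $H^i(E(m))=0$ for $1\le i\le n_1-1$ and all $m\in\ZZ$ (the exceptional indices $i=n_1$ fall outside the range), and every direct sum of line bundles is admissible. For the inductive step, write $X=X'\times\PP^{n_s}$ with projections $\pi\colon X\to X'$ and $q\colon X\to\PP^{n_s}$, and apply the relative Beilinson resolution in the $\PP^{n_s}$-direction, yielding a spectral sequence of the shape
\[
E_1^{p,c}=R^{c}\pi_*\big(E\otimes q^*\OO_{\PP^{n_s}}(p)\big)\otimes q^*\Omega^{-p}_{\PP^{n_s}}(-p)\ \Longrightarrow\ E,\qquad -n_s\le p\le 0,
\]
so that $E$ is reconstructed from the direct images $R^c\pi_*(E(0,\dots,0,p))$. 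The cohomological hypotheses on $X$, fed through the Leray and K\"unneth formulas, control these images: in the admissible range they force all but finitely many to vanish and identify the survivors as vector bundles on $X'$ satisfying exactly the hypotheses of the theorem on $X'$, with the parameters $r_i$ governing which twists $p$ contribute. By the inductive hypothesis each survivor splits into the allowed line bundles on $X'$; tensoring back with the $q^*\OO(p)$-factors produces candidate summands of the required shape on $X$.

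The main obstacle is the final assembly: showing that the spectral sequence degenerates and that the induced filtration of $E$ splits as a genuine direct sum rather than a nontrivial iterated extension. This reduces to proving that the relevant groups $\operatorname{Ext}^1$ between the candidate summands vanish, which I would translate back into cohomology groups $H^1$ and $H^{n_k}$ of twists of $E$ and of the line bundles, and then annihilate using the hypotheses. The delicate point is that the two families of exceptions, $i=n_1$ with $j_k\ge j_1+n_1-r_1+1$ and $i=n_k$ with $j_k\le j_1-n_k+r_k-1$, are precisely the tuples where top cohomology of the admissible bundles is permitted; one must check that these exceptions are exactly compatible with, and do not obstruct, the vanishing needed for splitting. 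Matching these index windows factor by factor, and confirming that no nonzero differential or extension class can survive, is the technical heart of the argument.
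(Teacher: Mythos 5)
Your route (induction on the number of factors $s$, with a relative Beilinson spectral sequence in the last factor) is genuinely different from the paper's, which never inducts on $s$: the paper fixes a minimal $t$ with $E(t,\dots,t)$ $0$-regular and splits off one line bundle at a time via Serre duality and Koszul-complex surjections $H^0(E(\cdots))\onto H^{n_i}(E(\cdots))$, exactly as in its proof of Theorem 1.2. But your proposal has a fatal base-case problem. For $s=1$ the statement is \emph{not} Horrocks' theorem: the conclusion requires the summands to be $\OO,\OO(1),\dots,\OO(r_1)$ up to a common twist, i.e.\ all degrees confined to a window of width $r_1$, while the hypotheses degenerate to pure intermediate-cohomology vanishing, because the excepted indices $i=n_1$, $i=n_k$ lie outside the range $1\le i\le n_1-1$. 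Thus $E=\OO\oplus\OO(5)$ on $\PP^2$ with $r_1=0$ satisfies the hypotheses but not the conclusion; the one-factor case of the statement is false as written, and in any event is strictly stronger than Horrocks, so your claim that "every direct sum of line bundles is admissible" in the base case is wrong. The exceptions (a), (b), which encode the window constraint, only have content when $s\ge2$, so the induction would have to start from Miyazaki's two-factor theorem, not from Horrocks.

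The inductive step also breaks at a concrete point. The theorem's hypotheses control only \emph{diagonal} twists $E(j_1+t,\dots,j_s+t)$ with $-n_i\le j_i\le0$, whereas your Beilinson argument needs cohomological control of the direct images $R^c\pi_*(E(0,\dots,0,p))$ on $X'$ after arbitrary diagonal twists of $X'$. By the projection formula and Leray, those groups are governed by $H^{i+c}\bigl(X,E(j_1+t,\dots,j_{s-1}+t,p)\bigr)$, where the last coordinate stays fixed at $p\in[-n_s,0]$ while $t$ runs over all of $\ZZ$; writing the last coordinate as $j_s+t$ forces $j_s=p-t$, which exits the window $[-n_s,0]$ as soon as $|t|$ is large. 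So the key claim that the surviving direct images "satisfy exactly the hypotheses of the theorem on $X'$" cannot be extracted from the given vanishing. Finally, you yourself identify the degeneration of the spectral sequence and the vanishing of the relevant $\operatorname{Ext}^1$ groups as the "technical heart" and leave it open; that is precisely the step the paper does by hand (Serre duality producing dual classes $s$, $s^*$, Koszul-complex surjections lifting them to sections $g$, $f$, and the verification that $f\circ g$ is an isomorphism). As it stands the proposal is a strategy with the decisive steps missing, and with a base case and an inductive transfer that both fail.
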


\noindent We also prove that 

\begin{lemma}
 Let $E$ be a vector bundle on $X=\PP^{n}\times\cdots\times\PP^{n}$ that is $\big(\PP^{n}\big)^s$,
Then the following are equivalent:
\begin{enumerate}
\item For any integers $\ell_1,\ldots,\ell_s,\ell$
    \begin{enumerate}
    \renewcommand{\theenumi}{\alph{enumi}}
    \item $H^t(X,E(\ell_1,\cdots,\ell_s)) = 0$ with $|\ell_i-\ell_j|\leq n$ for $i,j=1,\ldots,s$ and $t=1,\dots,n-1,n+1,\ldots,2n-1,2n+1,\ldots,sn-1,sn+1$
    \item $H^n(X,E(\ell,\cdots,\ell)) = 0$
    \end{enumerate}
\item The vector bundle $E$  is isomorphic to a direct sum of line bundles of the form $\mathcal{O}_X(u_1,\ldots,u_s)$ for some
$u_1,\ldots,u_s$ with $|u_i-u_j|\leq n$
\end{enumerate}
\end{lemma}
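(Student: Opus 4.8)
The plan is to establish the two implications separately, treating $(2)\Rightarrow(1)$ as the direct computation and deducing $(1)\Rightarrow(2)$ from the second theorem specialized to $n_1=\cdots=n_s=n$ and $r_1=\cdots=r_s=n$; I take $s\geq2$, so that the product has at least two distinct slots.

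For $(2)\Rightarrow(1)$, suppose $E\cong\bigoplus_k\OO_X(u_1^k,\ldots,u_s^k)$ with $|u_i^k-u_j^k|\leq n$. Since cohomology commutes with finite direct sums it suffices to treat one line bundle, and by the Künneth formula
\[H^t(X,\OO_X(v_1,\ldots,v_s))=\bigoplus_{t_1+\cdots+t_s=t}H^{t_1}(\PP^n,\OO(v_1))\otimes\cdots\otimes H^{t_s}(\PP^n,\OO(v_s)).\]
As $H^{t_i}(\PP^n,\OO(v_i))=0$ unless $t_i\in\{0,n\}$, any nonzero summand forces $t$ to be a multiple of $n$; since the exponents in $(1a)$ are exactly the integers up to $sn+1$ that are \emph{not} multiples of $n$ (and $sn+1>\dim X$), condition $(1a)$ holds for every twist. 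For $(1b)$, a class in $H^n(X,\OO_X(u_1^k+\ell,\ldots,u_s^k+\ell))$ requires one slot $i$ with $u_i^k+\ell\leq -n-1$ and every other slot $j$ with $u_j^k+\ell\geq 0$; subtracting gives $u_j^k-u_i^k\geq n+1$, contradicting the bound. Hence every diagonal $H^n$ vanishes. Note that only $(1b)$ uses the bound on the $u$'s.

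For $(1)\Rightarrow(2)$ I would feed the vanishing $(1a)$, $(1b)$ into the second theorem. The twists appearing there, $E(j_1+t,\ldots,j_s+t)$ with $-n\leq j_k\leq 0$, automatically satisfy $|(j_i+t)-(j_k+t)|=|j_i-j_k|\leq n$, so they lie in the window of $(1a)$; thus every required vanishing at a degree $i$ that is not a multiple of $n$ is supplied by $(1a)$. Among the degrees divisible by $n$ one checks that, in the equal-dimension case with $r_k=n$, the two exception families of the second theorem exclude every admissible twist except the diagonal $j_1=\cdots=j_s$ at $i=n$, which is precisely $(1b)$. With its hypotheses verified, the second theorem splits $E$ into the listed line bundles twisted by $\OO_X(\ell,\ldots,\ell)$; each building block $\OO_X(a_1,\ldots,a_s)$ has $0\leq a_k\leq n$, so the summands $\OO_X(u_1,\ldots,u_s)$ satisfy $|u_i-u_j|=|a_i-a_j|\leq n$, which is $(2)$.

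The main obstacle is the degree bookkeeping at the multiples of $n$ in the step $(1)\Rightarrow(2)$: one must verify that, after specialization, the exception families of the second theorem account for all of the non-diagonal $H^n$ together with the entire intermediate range $H^{2n},\ldots,H^{(s-1)n}$, so that the only hypothesis genuinely beyond $(1a)$ is the single diagonal condition $(1b)$. Should this reconciliation not be immediate from the statement, the fallback is an induction on $s$ using Künneth and Horrocks on the diagonal $\PP^n$; but the computation of $(2)\Rightarrow(1)$ and the translation of the output bundles into the bound $|u_i-u_j|\leq n$ are in any case routine.
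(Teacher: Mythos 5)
Your direction $(2)\Rightarrow(1)$ is correct: it is the routine K\"unneth computation, and your observation that only $(1b)$ uses the bound $|u_i-u_j|\leq n$ is accurate. The problem is the direction $(1)\Rightarrow(2)$, and the obstacle you flagged at the end is not mere bookkeeping --- it is fatal to this route whenever $s\geq 3$. Specializing the second theorem to $n_1=\cdots=n_s=n$ and $r_1=\cdots=r_s=n$, both exception families sit in cohomological degree $n$ only (they read $i=n_1=n$ with $j_k\geq j_1+1$, and $i=n_k=n$ with $j_k\leq j_1-1$), so the hypotheses of that theorem still demand the vanishing of $H^i(E(j_1+t,\ldots,j_s+t))$ for every admissible twist in the degrees $i=2n,3n,\ldots,(s-1)n$, for instance $(j_1,\ldots,j_s)=(-n,-n,0,\ldots,0)$ at $i=2n$. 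Condition $(1)$ says nothing about these: $(1a)$ excludes all multiples of $n$, and $(1b)$ covers only degree $n$ on the diagonal. Moreover, this gap cannot be closed by cleverer reconciliation, because the conclusion of the second theorem is strictly narrower than $(2)$: it only produces line bundles with at most one slot off the diagonal, i.e.\ of the form $\OO_X(\ell,\ldots,\ell,a+\ell)$ or $\OO_X(a+\ell,\ell,\ldots,\ell)$, whereas $(2)$ allows, say, $E=\OO_X(0,1,2)$ on $(\PP^2)^3$. That bundle satisfies $(1)$ --- by K\"unneth, $(1a)$ is automatic for any line bundle, and $(1b)$ holds because the entries never spread by more than $n$ --- yet it violates the second theorem's hypotheses: $H^4\bigl(\OO_X(0,1,2)(t-2,t-2,t)\bigr)=H^4\bigl(\OO_X(-4,-3,0)\bigr)\neq 0$ at $t=-2$, which is the datum $i=2n=4$, $(j_1,j_2,j_3)=(-2,-2,0)$. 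So $(1)$ genuinely does not imply the hypotheses of the second theorem, and hence the second theorem cannot deliver the splitting in $(2)$.

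For what it is worth, the paper itself contains no proof of this lemma: Section 3 proves only the two theorems, and the introduction merely asserts that ``the third theorem is a consequence of the second theorem'' --- exactly the claim your attempt shows to be unworkable for $s\geq 3$. (For $s=2$ the deduction does go through, since then there are no intermediate multiples of $n$ and a line bundle with $|u_1-u_2|\leq n$ does have only one slot off the diagonal; but that case is precisely Miyazaki's Proposition 5.2 of [16], quoted in the paper as a preliminary lemma.) A correct proof therefore needs a genuinely different argument, along the lines of your fallback: for example, induction on $s$ using restrictions to slices of the form $\PP^n\times\cdots\times\{pt\}\times\cdots\times\PP^n$ together with K\"unneth and Horrocks, or the syzygy-theoretic method Miyazaki uses in [16] for the biprojective case.
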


\section{Preliminaries}

\begin{definition} A coherent sheaf F on $\PP^{n_1}\times\cdots\times\PP^{n_s}$ is said to be $(p_1,\ldots,p_s)$-regular
if, for all $i>0$ whenever $k_1 + \cdots+k_s = -i$ and $-n_j \leq k_j \leq 0$ for any $j = 1,\ldots,s$.
\end{definition}

\begin{definition}
\begin{enumerate}
 \item A coherent sheaf $E$ on $\PP^{n_1}\times\cdots\times\PP^{n_s}$ is said to be 0-regular if, for all $t>0$,
	\[H^t(E(j_1,\ldots,j_s)) = 0\] for all integers $t,j_1,\dots,j_s$ such that $t\geq1$, $j_1 +\ldots + j_s = -t$, $-n_i \leq j_i\leq 0$ for any $i = 1,\ldots,s$.
 \item A coherent sheaf $E$ on $\PP^{n_1}\times\cdots\times\PP^{n_s}$ is said to be $(m_1,\dots,m_s)$-regular if $E(m_1,\cdots,m_s)$ is 0-regular.
 \item Let $E$ be a coherent sheaf on $\PP^{n_1}\times\cdots\times\PP^{n_s}$ and suppose $E$ is 0-regular. For a generic hyperplane $H_i$ of $\mathbb{P}^{n_i}$, $E|_{L_i}$ is 0-regular on $L_i=H_i\times\mathbb{P}^{n_i}$
for all $i=1,\dots,s$.
 \item Let $E$ be a vector bundle on $\PP^{n_1}\times\cdots\times\PP^{n_s}$, suppose that $E$ is 0-regular.
 Then $E(m_1,\ldots,m_s)$ is 0-regular for $m_i\geq0$, $E$ is globally generated.
\end{enumerate}
\end{definition}

\begin{theorem}[K\"{u}nneth formula]
 Let $X$ and $Y$ be projective varieties over a field $k$. 
 Let $\mathscr{F}$ and $\mathscr{G}$ be coherent sheaves on $X$ and $Y$ respectively.
 Let $\mathscr{F}\boxtimes\mathscr{G}$ denote $p_1^*(\mathscr{F})\otimes p_2^*(\mathscr{G})$\\
 then $\displaystyle{H^m(X\times Y,\mathscr{F}\boxtimes\mathscr{G}) \cong \bigoplus_{p+q=m} H^p(X,\mathscr{F})\otimes H^q(Y,\mathscr{G})}$.
\end{theorem}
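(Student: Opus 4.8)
The plan is to reduce the statement to the purely algebraic Künneth theorem for cochain complexes of $k$-vector spaces by computing all three cohomology groups via Čech cohomology relative to affine product covers. Since $X$ and $Y$ are projective over $k$, they are separated and of finite type, so they admit finite affine open covers whose finite intersections are again affine; on a separated scheme such a cover is a Leray cover for any quasi-coherent sheaf (higher cohomology vanishes on affines), and the coherent sheaves here are quasi-coherent, so Čech cohomology on these covers computes the derived-functor cohomology. This frees me to work with explicit complexes throughout.

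First I would fix finite affine open covers $\mathfrak{U}=\{U_a\}$ of $X$ and $\mathfrak{V}=\{V_b\}$ of $Y$ and form the product cover $\mathfrak{W}=\{U_a\times V_b\}$ of $X\times Y$, whose members and all their finite intersections are affine. The central computation is a section-level identity on affine pieces: writing $A=\OO(U_a)$ and $B=\OO(V_b)$, the pullback $p_1^*\mathscr{F}$ corresponds on $U_a\times V_b=\mathrm{Spec}(A\otimes_k B)$ to $\mathscr{F}(U_a)\otimes_A(A\otimes_k B)=\mathscr{F}(U_a)\otimes_k B$, and likewise $p_2^*\mathscr{G}$ corresponds to $A\otimes_k\mathscr{G}(V_b)$, so that
\[
\Gamma\!\big(U_a\times V_b,\;\mathscr{F}\boxtimes\mathscr{G}\big)\;=\;\big(\mathscr{F}(U_a)\otimes_k B\big)\otimes_{A\otimes_k B}\big(A\otimes_k\mathscr{G}(V_b)\big)\;\cong\;\mathscr{F}(U_a)\otimes_k\mathscr{G}(V_b).
\]

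Feeding this into the Čech formalism would identify the Čech complex $C^\bullet(\mathfrak{W},\mathscr{F}\boxtimes\mathscr{G})$ with the total complex of the tensor product $C^\bullet(\mathfrak{U},\mathscr{F})\otimes_k C^\bullet(\mathfrak{V},\mathscr{G})$: an intersection $(U_{a_0}\cap\cdots\cap U_{a_p})\times(V_{b_0}\cap\cdots\cap V_{b_q})$ contributes exactly $\mathscr{F}(U_{a_0\cdots a_p})\otimes_k\mathscr{G}(V_{b_0\cdots b_q})$ by the displayed identity, while the Čech differential on $\mathfrak{W}$ decomposes as the signed sum of the two factor differentials. Finally I would invoke the algebraic Künneth theorem for complexes of $k$-vector spaces: because every $k$-vector space is flat, all correction $\mathrm{Tor}$-terms vanish and one obtains $H^m(\mathrm{Tot})\cong\bigoplus_{p+q=m}H^p(C^\bullet(\mathfrak{U},\mathscr{F}))\otimes_k H^q(C^\bullet(\mathfrak{V},\mathscr{G}))$, which after translating Čech back to sheaf cohomology is precisely the asserted isomorphism.

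The step I expect to be the main obstacle is the identification in the second and third paragraphs, namely justifying that the external tensor product really has the claimed sections on affine products and that the full Čech complex genuinely factors as a tensor product of complexes with the correct differential. This is where the field hypothesis is essential: flatness of $k$-modules guarantees exactness of $-\otimes_k-$ and kills every higher $\mathrm{Tor}$, and some care is needed with the bookkeeping of indices and signs in passing to the total complex. By contrast the algebraic Künneth input is standard once the complexes are in hand, and finiteness of the cohomology of coherent sheaves on projective varieties over $k$ ensures all the tensor products are finite-dimensional and well behaved.
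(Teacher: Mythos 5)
The paper never proves this statement (it is quoted as classical background in the Preliminaries, with no proof environment), so your argument has to stand on its own. Most of it does: the reduction to \v{C}ech cohomology over finite affine Leray covers is valid for quasi-coherent sheaves on separated schemes, and the section-level identity $\Gamma(U_A\times V_B,\mathscr{F}\boxtimes\mathscr{G})\cong\mathscr{F}(U_A)\otimes_k\mathscr{G}(V_B)$ on affine products is correct. The genuine gap is exactly at the step you flagged as the main obstacle: the \v{C}ech complex $C^\bullet(\mathfrak{W},\mathscr{F}\boxtimes\mathscr{G})$ of the product cover is \emph{not} isomorphic to the total complex of $C^\bullet(\mathfrak{U},\mathscr{F})\otimes_k C^\bullet(\mathfrak{V},\mathscr{G})$, and its differential does not decompose as a signed sum of the two factor differentials. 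A $1$-simplex of $\mathfrak{W}$ is a pair $\bigl((a_0,b_0),(a_1,b_1)\bigr)$ in which \emph{both} coordinates may jump simultaneously; it contributes $\mathscr{F}(U_{a_0a_1})\otimes_k\mathscr{G}(V_{b_0b_1})$ to $C^1(\mathfrak{W})$, whereas that group sits in bidegree $(1,1)$, i.e.\ total degree $2$, of the tensor-product complex. A length count confirms the mismatch: with $r$ and $s$ opens the alternating \v{C}ech complex of $\mathfrak{W}$ has length $rs-1$, while the total tensor complex has length $(r-1)+(s-1)$, so no choice of sign bookkeeping can produce the isomorphism you assert.

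The two complexes are, however, canonically quasi-isomorphic, and supplying that quasi-isomorphism is the missing idea. Two standard repairs: (i) treat $(p,q)\mapsto\prod\mathscr{F}(U_{a_0\cdots a_p})\otimes_k\mathscr{G}(V_{b_0\cdots b_q})$ as a bi-cosimplicial vector space and invoke the Eilenberg--Zilber/Alexander--Whitney theorem, which gives a quasi-isomorphism between $C^\bullet(\mathfrak{W},\mathscr{F}\boxtimes\mathscr{G})$ (the diagonal) and the total complex; or (ii) bypass $C^\bullet(\mathfrak{W})$ altogether by sheafifying the double complex: resolve $\mathscr{F}\boxtimes\mathscr{G}$ by the sheaves $\prod_{A,B}(j_{A,B})_*\bigl((\mathscr{F}\boxtimes\mathscr{G})|_{U_A\times V_B}\bigr)$, which are $\Gamma$-acyclic because pushforwards of quasi-coherent sheaves from affine opens of a separated scheme are acyclic; exactness of the augmented total complex holds since, over the field $k$, the tensor product of the two locally split augmented \v{C}ech resolutions remains exact, and global sections then yield precisely your tensor-product total complex computing $H^*(X\times Y,\mathscr{F}\boxtimes\mathscr{G})$. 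With either patch, the purely algebraic K\"{u}nneth theorem over $k$ finishes the proof as you say (and, as you correctly note, only flatness over $k$ is needed, not finite-dimensionality); this repaired argument is the standard one found in EGA III 6.7.8 and the Stacks project.
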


\begin{lemma}
Let $X=\PP^{n_s}\times\cdots\times\PP^{n_s}$ then\\
$\displaystyle{H^t(X,\OO_X (a_1,\cdots,a_s))\cong \bigoplus_{t=\sum_{i=1}^{s}{t_i}} H^{t_1}(\PP^{n_1},\OO_{\PP^{n_1}}(a_1))\otimes \cdots \otimes H^{t_s}(\PP^{n_s},\OO_{\PP^{n_s}}(a_s))}$
 \end{lemma}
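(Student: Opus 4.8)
The plan is to reduce the statement to the two–factor Künneth formula (the Theorem stated above) and then iterate by induction on the number of factors $s$. The first observation is that the line bundle appearing here is, by its very definition in the Notation, an external tensor product: since $\OO_X(a_1,\ldots,a_s)=p_1^*\OO_{\PP^{n_1}}(a_1)\otimes\cdots\otimes p_s^*\OO_{\PP^{n_s}}(a_s)$, it is exactly the box product $\OO_{\PP^{n_1}}(a_1)\boxtimes\cdots\boxtimes\OO_{\PP^{n_s}}(a_s)$. This is precisely the shape of sheaf to which the Künneth Theorem applies, so the work is organizational rather than cohomological.

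The base case $s=1$ is trivial, since the indexing sum $t=\sum t_i$ has the single term $t_1=t$. For $s=2$ the claim is literally the statement of the Künneth Theorem above, taking $\mathscr{F}=\OO_{\PP^{n_1}}(a_1)$ and $\mathscr{G}=\OO_{\PP^{n_2}}(a_2)$.

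For the inductive step I would write $X=X'\times\PP^{n_s}$ with $X'=\PP^{n_1}\times\cdots\times\PP^{n_{s-1}}$, and set $\mathscr{F}=\OO_{X'}(a_1,\ldots,a_{s-1})$, $\mathscr{G}=\OO_{\PP^{n_s}}(a_s)$. Recognizing $\OO_X(a_1,\ldots,a_s)=\mathscr{F}\boxtimes\mathscr{G}$, the Künneth Theorem gives
\[H^t(X,\OO_X(a_1,\ldots,a_s))\cong\bigoplus_{p+t_s=t}H^p(X',\mathscr{F})\otimes H^{t_s}(\PP^{n_s},\OO_{\PP^{n_s}}(a_s)).\]
Applying the induction hypothesis to each summand $H^p(X',\mathscr{F})$ replaces it with $\bigoplus_{p=t_1+\cdots+t_{s-1}}H^{t_1}(\PP^{n_1},\OO_{\PP^{n_1}}(a_1))\otimes\cdots\otimes H^{t_{s-1}}(\PP^{n_{s-1}},\OO_{\PP^{n_{s-1}}}(a_{s-1}))$. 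Substituting this into the displayed isomorphism and using associativity of $\otimes$, the nested direct sums collapse into a single direct sum indexed by all decompositions $t=t_1+\cdots+t_s$, which is exactly the asserted formula.

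The only real care needed is bookkeeping: one must check that the composite index set $\{(p,t_s):p+t_s=t\}$ together with $\{(t_1,\ldots,t_{s-1}):\sum_{i<s}t_i=p\}$ biject onto $\{(t_1,\ldots,t_s):\sum_i t_i=t\}$, and that the associativity isomorphism of tensor products is compatible with the two nested Künneth isomorphisms. This is where the argument must be written out most carefully, and it is the only genuine obstacle. There is, however, no cohomological difficulty: the Künneth statement quoted above already absorbs the flatness and $\mathrm{Tor}$-vanishing hypotheses, and since everything is defined over a field $k$, no higher $\mathrm{Tor}$ terms appear, so the clean direct-sum form is valid at each stage of the induction.
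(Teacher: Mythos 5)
Your proposal is correct and matches the paper's (implicit) justification: the paper states this lemma immediately after the two-factor K\"unneth theorem and offers no further argument, treating it exactly as the iterated/inductive application of that theorem which you spell out. Your write-up simply fills in the bookkeeping --- recognizing $\OO_X(a_1,\ldots,a_s)$ as an external tensor product, splitting off the last factor, and collapsing the nested direct sums --- and that is precisely what the paper leaves to the reader.
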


\begin{rem}
\begin{enumerate}
 \item We will often say “regular” instead of “ $(0,\cdots,0)$-regular ”, and “ $p$-regular ” instead of “ $(p,\cdots, p)$-regular ”. 
We define the regularity of $F$, $Reg(F)$, as the least integer $p$ such that $F$ is $p$-regular. We set $Reg(\mathscr{F}) = -\infty$ if there is no such integer.
\item The K\"{u}nneth's formula tells that $\OO(a_1,\cdots,a_s)$ is regular if and only if $a_i\geq0$ , $i=1,\cdots,s$.
In fact \[H^{n_1+\cdots+n_s}(\OO(a_1-n_1,\cdots,a_s-n_s)) = H^{n_1}(\OO(a_1-n_1))\otimes\cdots\otimes H^{n_s}(\OO(a_s-n_s)) = 0\]
if and only if $a_i\geq0$ , $i=1,\cdots,s$.\\
Since
\[H^{n_1}(\OO(a_1-n_1, a_2, \cdots, a_s)) \cong H^{n_1}(\OO(a_1-n_1))\otimes H^0(\OO(a_2))\otimes\cdots\otimes H^0(\OO(a_s))\]
\[H^{n_2}(\OO(a_1, a_2-n_2, \cdots, a_s)) \cong H^0(\OO(a_1))\otimes H^{n_2}(\OO(a_2-n_2))\otimes\cdots\otimes H^0(\OO(a_s))\]
$\cdots\cdots\cdots\cdots\cdots\cdots\cdots\cdots\cdots\cdots\cdots\cdots\cdots\cdots\cdots\cdots\cdots\cdots\cdots$
\[H^{n_s}(\OO(a_1, a_2, \cdots, a_s-n_s)) \cong H^0(\OO(a_1))\otimes H^0(\OO(a_2))\otimes\cdots\otimes H^{n_s}(\OO(a_s-n_s))\]
we see that, if $\OO(a_1,\cdots,a_s)$ is regular, then we have $a_1,\cdots,a_s\geq0$.
Specifically $\OO$ is regular while $\OO(-1,\cdots,-1)$ is not and so $Reg(\OO) = 0$
\end{enumerate}
\end{rem}

\noindent We extend the notion of regularity to $X = \PP^{n_1}\times\cdots\times\PP^{n_s}$ in order to prove some splitting criteria of vector bundles.
We need the following results:

\begin{definition} A vector bundle $E$ on $X$ is arithmetically Cohen-Macaulay (aCM) if, for any $\displaystyle{0<i<\sum_{j=i}^sn_j}$
and for any integer $t$, $H^i(E(t,\cdots,t)) = 0$.
\end{definition}

\begin{proposition}
\begin{enumerate}
 \item The K\"{u}nneth's formula gives: $\OO(a_1,\cdots,a_s)$ is aCM if and only if $a_i-a_j\geq-n_i$ and
$a_j-a_i\geq-n_j$ for $i=1,\cdots,s$.
\item On $X$ we have the following Koszul complexes:
\[K_1:0 \rightarrow \OO(-n_1-1,\cdots,-n_s-1)\rightarrow \OO(-n_1,\cdots,-n_s-1)^{{n_1+1}\choose{n_1}}\rightarrow\cdots\rightarrow
\OO(0,-n_2-1,\cdots-n_s-1)\rightarrow0\]
\[K_2:0 \rightarrow \OO(0,-n_2-1,\cdots-n_s-1)\rightarrow\OO(0,0,\cdots,-n_s-1)\rightarrow \OO(0,0,\cdots,-n_s)^{{n_s+1}\choose{n_s}}\rightarrow\cdots\rightarrow\OO\rightarrow0\]
\[K_3:0 \rightarrow \OO(-n_1-1,\cdots,-n_s-1)\rightarrow \OO(-n_1,\cdots,-n_s-1)^{{n_1+1}\choose{n_1}}\rightarrow\cdots
\rightarrow\OO(0,\cdots-n_s-1)\rightarrow\cdots\rightarrow\OO\rightarrow0\]

\end{enumerate}

\noindent On taking cohomology of the the above exact sequences we get the isomorphisms:
\[H^{n_s}(\OO(0,0,\cdots,-n_s-1))\cong H^{n_1+\cdots+n_s}(\OO(-n_1-1,\cdots,-n_s-1))\]
\[H^{n_j}(\OO(0,\cdots,0,-n_j-1,0,\cdots,0))\cong H^{n_1+\cdots+n_s}(\OO(-n_1-1,\cdots,-n_s-1))\]
\[H^0(\OO)\cong H^{n_s}(\OO(0\cdots,-n_s-1))\]
\[H^0(\OO)\cong H^{n_j}(\OO(0,\cdots,0,-n_j-1,0\cdots,0))\]
\end{proposition}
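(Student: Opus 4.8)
The plan is to treat the three parts in turn, all resting on the Künneth isomorphism of the preceding Lemma together with the elementary fact that on $\PP^{n}$ the line bundle $\OO(d)$ has nonzero cohomology only in degree $0$ (when $d\ge0$) or in degree $n$ (when $d\le-n-1$), and is acyclic for $-n\le d\le-1$. For the aCM criterion of $(1)$ I would expand $H^i(\OO(a_1+t,\dots,a_s+t))$ by Künneth as a sum over multi-indices $(i_1,\dots,i_s)$ with $i_1+\cdots+i_s=i$ of the tensor products $H^{i_1}(\PP^{n_1},\OO(a_1+t))\otimes\cdots\otimes H^{i_s}(\PP^{n_s},\OO(a_s+t))$. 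A summand can be nonzero only when each $i_k\in\{0,n_k\}$; writing $S=\{k:i_k=n_k\}$ one gets $i=\sum_{k\in S}n_k$, and the intermediate range $0<i<n_1+\cdots+n_s$ corresponds exactly to $S$ being a proper nonempty subset. Such a summand is nonzero for some twist $t$ precisely when the inequalities $a_k+t\ge0$ for $k\notin S$ and $a_k+t\le-n_k-1$ for $k\in S$ admit a common solution, i.e. when $a_k-a_\ell\le-n_k-1$ for all $k\in S$ and $\ell\notin S$. If all the stated inequalities $a_k-a_\ell\ge-n_k$ hold, no partition can meet this, so there is no intermediate cohomology and $\OO(a_1,\dots,a_s)$ is aCM; this is the easy implication.

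I expect the main obstacle to be the converse of $(1)$: if some inequality $a_k-a_\ell\ge-n_k$ fails, one must genuinely exhibit a proper nonempty $S$ and a twist $t$ producing a nonzero intermediate group. The difficulty is the acyclic ``gap'' range $-n_m\le a_m+t\le-1$, in which a single factor forces the whole Künneth summand to vanish, so a carelessly chosen $t$ witnesses nothing. I would order the $a_m$ and select the threshold $t$ together with $S$ so as to route every index into either the $H^0$-part or the top-cohomology part while avoiding all gaps; this is a finite bookkeeping that always succeeds once one genuinely bad pair is present.

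For $(2)$, on each factor $\PP^{n_k}$ the $n_k+1$ homogeneous coordinates are sections of $\OO(1)$ with empty common zero locus, so their Koszul complex
\[0\to\OO(-n_k-1)\to\OO(-n_k)^{\binom{n_k+1}{n_k}}\to\cdots\to\OO(-1)^{\binom{n_k+1}{1}}\to\OO\to0\]
is exact; pulling these back by the projections $p_k$ and forming external tensor products (equivalently, tensoring the exact Koszul complex of one factor by a fixed line bundle pulled back from the others) yields $K_1,K_2,K_3$, exactness being preserved since every term is locally free. Part $(3)$ is then the payoff: splitting each $K_i$ into short exact sequences $0\to Z_j\to A_j\to Z_{j-1}\to0$ and running the long exact cohomology sequences, every intermediate term carries on the resolved factor a twist $\OO(-m)$ with $1\le m\le n_k$ and is therefore acyclic by Künneth, so each connecting map is an isomorphism and the cohomology telescopes with a degree shift equal to the dimension of the resolved factor. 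For a single factor this reads $H^{q}(\OO(0,b_2,\dots,b_s))\cong H^{q+n_1}(\OO(-n_1-1,b_2,\dots,b_s))$ for any fixed $b_2,\dots,b_s$, and symmetrically for the other factors; iterating over the factors, and specializing $q$ to the unique degree in which the relevant one-dimensional group lives, produces the four displayed isomorphisms.
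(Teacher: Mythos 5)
Your handling of parts (2) and (3) is correct: pulling back the Koszul complex of the coordinate sections on one factor, twisting by a line bundle pulled back from the remaining factors, and telescoping through the acyclic intermediate twists $\OO(-m)$, $1\le m\le n_k$, gives the shift $H^{q}(\OO(0,b_2,\dots,b_s))\cong H^{q+n_1}(\OO(-n_1-1,b_2,\dots,b_s))$, and iterating over the factors yields all four displayed isomorphisms. (In fact your reading quietly repairs the paper's displays: $K_2$ and $K_3$ as printed are splices of several one-factor Koszul complexes and are not exact at the junction terms, whereas the one-factor-at-a-time complexes you use are genuinely exact, which is all the telescoping argument needs.)

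The genuine gap is in the converse half of (1), and it is not a bookkeeping issue that more care can fix: the ``only if'' direction of the stated equivalence is false once $s\ge 3$, so the routing argument you sketch cannot succeed. Take $X=\PP^1\times\PP^1\times\PP^1$ and $(a_1,a_2,a_3)=(0,1,2)$. The pairwise condition fails, since $a_1-a_3=-2<-n_1=-1$ (and the permuted example $(0,2,1)$ kills the reading with consecutive indices as well). Yet $\OO(0,1,2)$ \emph{is} aCM: for $t\ge 0$ all entries of $(t,t+1,t+2)$ are nonnegative, so only $H^0$ survives in the K\"unneth sum; for $t=-1,-2,-3$ one of the entries equals $-1$, and since $H^0(\PP^1,\OO(-1))=H^1(\PP^1,\OO(-1))=0$ every K\"unneth summand dies, so \emph{all} cohomology vanishes; for $t\le-4$ all entries are $\le-2$, so only the top group $H^3$ survives. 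Hence $H^i(\OO(t,t+1,t+2))=0$ for $i=1,2$ and every $t$, even though a ``bad pair'' is present: the middle index $a_2$ always lands in the acyclic gap exactly when the outer two are separated, and no proper nonempty subset $S$ with twist $t$ can avoid this. The correct characterization of non-aCM is the existence of a proper nonempty $S\subset\{1,\dots,s\}$ with $\min_{\ell\notin S}a_\ell\ \ge\ \max_{k\in S}(a_k+n_k+1)$, i.e. a genuine jump in the sorted sequence of the $a_i$; this coincides with the pairwise condition only for $s=2$, which is why the biprojective statement of Miyazaki and Ballico--Malaspina is true while its verbatim extension asserted in this Proposition is not. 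Your easy implication (the pairwise condition implies aCM) is correct, and it is the only direction used elsewhere in the paper, but the equivalence you set out to prove cannot be established because it does not hold.
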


\noindent The following two results was proved by Miyazaki \cite{15} for a biprojective spaces $\PP^{n_1}\times\PP^{n_2}$
which we extend to a multi projective space $\PP^{n_1}\times\cdots\times\PP^{n_s}$ in the main results section of this paper.

\begin{theorem}[Theorem 1.1 \cite{15}]
Let $E$ be a vector bundle on $\PP^{n_1}_k\times\PP^{n_2}_k$, where $n_1\geq2$ and $n_2\geq2$.
The vector bundle $E$ is a direct sum of line bundles of $\OO_X$, $\OO_X(0,1)$, $\OO_X(0,2)$, $OO_X(1,0)$ and $\OO_X(2,0)$ 
twisted by line bundles of the form $OO_X(\ell,\ell)$ if and only if
\[H^i(E(j_1 + t,j_2 + t)) = 0\]
for all integers $i, j_1, j_2$ and $t$ satisfying that $1\leq i \leq n_1 + n_2 - 1$, $-i\leq j_1 + j_2\leq0$,
$-n_1\leq j_1\leq0$ and $-n_2\leq j_2\leq0$ except for $(i,j_1,j_2)=(n_1, -n_1, 0),(n_1,-n_1 +1, 0),(n_2, 0, -n_2),(n_2, 0, -n_2 + 1)$.
\end{theorem}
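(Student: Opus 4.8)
The plan is to prove the two implications separately. The forward (``only if'') direction reduces to a finite cohomology computation via the K\"unneth formula, while the backward (``if'') direction is the substantial part, which I would carry out by induction on $n_1+n_2$, reducing to Horrocks' theorem on each factor through restriction to general divisors.

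For the forward direction, first observe that the admissible line bundles are exactly those $\OO_X(a,b)$ with $|a-b|\le 2$, since twisting by $\OO_X(\ell,\ell)$ leaves $a-b$ unchanged. It therefore suffices to check, for a single such summand and all shifts $t$, that $H^i(\OO_X(a+j_1+t,\,b+j_2+t))$ vanishes for every triple $(i,j_1,j_2)$ in the stated range other than the four exceptions. By the K\"unneth formula this group decomposes as $\bigoplus_{p+q=i}H^p(\PP^{n_1},\OO(a+j_1+t))\otimes H^q(\PP^{n_2},\OO(b+j_2+t))$, and on $\PP^n$ only $H^0$ (in nonnegative degree) and $H^n$ (in degree $\le -n-1$) are nonzero. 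Since $1\le i\le n_1+n_2-1$ excludes the top degree, a nonzero contribution forces one factor into degree $0$ and the other into degree $n_1$ or $n_2$; running the inequalities $-n_1\le j_1\le 0$, $-n_2\le j_2\le 0$, $-i\le j_1+j_2\le 0$ against $|a-b|\le 2$ shows the only surviving groups occur at $(n_1,-n_1,0)$, $(n_1,-n_1+1,0)$ (contributed by $\OO_X(0,1),\OO_X(0,2)$) and at $(n_2,0,-n_2)$, $(n_2,0,-n_2+1)$ (contributed by $\OO_X(1,0),\OO_X(2,0)$), which is precisely the exceptional list.

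For the backward direction I would use restriction to a general divisor. From the twisted exact sequences
\[0\to E(j_1+t-1,\,j_2+t)\to E(j_1+t,\,j_2+t)\to E(j_1+t,\,j_2+t)|_{H\times\PP^{n_2}}\to 0\]
(with $H\subset\PP^{n_1}$ a general hyperplane) and the analogous type-$(0,1)$ sequences, the long exact cohomology sequence transfers the vanishing hypotheses to $E|_{H\times\PP^{n_2}}$, now a bundle on $\PP^{n_1-1}\times\PP^{n_2}$ satisfying the corresponding conditions with $n_1$ lowered by one. Iterating until a factor reduces to a single projective space, where Horrocks' theorem \cite{12} applies, I would obtain that the restrictions to both rulings split into admissible line bundles. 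Using the regularity machinery of Section~2---after a global twist $\OO_X(\ell,\ell)$ the hypotheses force $E$ to be $0$-regular, hence globally generated---I would then peel off summands one at a time: the most negative admissible $L=\OO_X(a,b)$ detected on the restrictions admits a nonzero map $L\to E$ that is a subbundle, the quotient $E/L$ again satisfies the hypotheses, and the extension $0\to L\to E\to E/L\to 0$ splits because $\mathrm{Ext}^1(E/L,L)$ lands in a cohomology group forced to vanish by the non-exceptional part of the hypothesis. Induction on the rank $r$ then yields the full splitting.

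The hard part will be the peeling step: showing both that the first summand $L$ can be located with $|a-b|\le 2$ and that the relevant $\mathrm{Ext}^1$ genuinely lands among the vanishing (non-exceptional) cohomology rather than among the four exceptions. This is exactly where the precise exceptional list is indispensable---those four triples are the obstructions that actually survive---and controlling them requires the Koszul complexes $K_1,K_2,K_3$ of the Proposition together with careful K\"unneth bookkeeping to track how the admissible twists behave under restriction and how the difference $a-b$ is preserved throughout the induction.
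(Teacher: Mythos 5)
Your ``only if'' direction is correct: it is the standard K\"unneth computation, and it matches how this step is handled for the $s$-factor generalization (Theorem 1.2) in Section 3. The ``if'' direction, however, has a genuine gap, and it sits exactly where you flag ``the hard part.'' Splitting the restrictions of $E$ to hyperplane sections (ultimately to fibers) via Horrocks gives no mechanism for producing a global morphism $L\lra E$; and even if one exists, a nonzero map $\OO_X(a,b)\lra E$ need not be a subbundle inclusion (it can degenerate along a divisor), so the sequence $0\to L\to E\to E/L\to 0$ is not available. Worse, even granting it, your two supporting claims do not hold up: (i) the hypotheses do not pass to $E/L$ --- at the in-range, non-exceptional index $(i,j_1,j_2)=(n_1-1,-n_1+1,0)$ the long exact sequence reads $H^{n_1-1}(E(\cdot))\to H^{n_1-1}((E/L)(\cdot))\to H^{n_1}(L(-n_1+1+t,t))$, and for $L$ of type $\OO_X(0,2)$ and $t=-2$ the right-hand group is nonzero (this is precisely the exception $(n_1,-n_1+1,0)$), so the vanishing cannot be propagated; and (ii) $\mathrm{Ext}^1(E/L,L)=H^1((E/L)^{\vee}\otimes L)$ involves the dual of the quotient, which neither your hypotheses nor your induction controls. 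Also, your claim that the hypotheses force $0$-regularity after a twist is incorrect: $0$-regularity requires vanishing at $j_1+j_2=-i$, which includes the exceptional positions $(n_1,-n_1,0)$ and $(n_2,0,-n_2)$; a $0$-regular twist exists by Serre vanishing, not by hypothesis, and the whole content of the theorem is the analysis of how $0$-regularity fails at the minimal such twist.

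The proof the paper relies on (Miyazaki's, cited verbatim for this statement, and reproduced for the multiprojective generalization in Section 3) takes a route that avoids quotients altogether: take the minimal $t$ with $E(t,t)$ $0$-regular; the hypotheses confine the failure of $0$-regularity of $E(t-1,t-1)$ either to the top cohomology $H^{n_1+n_2}(E(-n_1+t-1,-n_2+t-1))$ or to the exceptional indices. In the first case Serre duality plus global generation splits off $\OO_X$; in the remaining cases one uses Koszul complexes pulled back from each factor to build surjections $H^0(E(\ast))\onto H^{n_1}(E(\ast))$ and, dually, $H^0(E^{\vee}(\ast))\onto H^{n_2}(E^{\vee}(\ast))$, producing explicit elements $g\in\Hom(\OO_X(a,b),E(t,t))$ and $f\in\Hom(E(t,t),\OO_X(a,b))$ whose composite $f\circ g$ is an isomorphism because it is computed by the canonical pairing into $H^{n_1+n_2}(\OO_X(-n_1-1,-n_2-1))$; this splits off the line bundle directly, and one concludes by induction on the rank. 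If you want to keep your restriction-induction framework, you must replace the peeling step by this duality-pairing argument; the $\mathrm{Ext}^1$-vanishing route cannot be made to work against the exceptional list.
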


\begin{proof}Theorem 1.1\cite{15}\end{proof}

\begin{theorem}[Theorem 2.5 \cite{15}]
Let $E$ be a vector bundle on $\PP^{n_1}_k\times\PP^{n_2}_k$, where $n_1\geq1$ and $n_2\geq1$.
Let $r_1$ and $r_2$ be integers such that $0\leq r_1\leq n_1$ and $0\leq r_2\leq n_2$.
The vector bundle $E$ is a direct sum of line bundles of $\OO_X$, $\OO_X(0,1)\dots\OO_X(0,r_1)$, $\OO_X(1,0)\dots\OO_X(r_2,0)$
twisted by line bundles of the form $OO_X(\ell,\ell)$ if and only if
\[H^i(E(j_1 + t,j_2 + t)) = 0\]
for all integers $i, j_1, j_2$ and $t$ satisfying that $1\leq i \leq n_1 + n_2 - 1$, $-i\leq j_1 + j_2\leq0$,
$-n_1\leq j_1\leq0$ and $-n_2\leq j_2\leq0$ except for either $i=n_1$ and $j_2\geq j_1+n_1-r_1+1$ or $i=n_2$ and $j_2\leq j_1-n_2+r_2-1$
\end{theorem}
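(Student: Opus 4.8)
The plan is to prove the two implications separately, treating necessity as a direct Künneth computation and sufficiency by an induction on $n_1+n_2$ that bottoms out at a $\PP^1$-factor via Grothendieck's theorem. A preliminary observation simplifies necessity: since the free parameter $t$ in $H^i(E(j_1+t,j_2+t))$ absorbs the diagonal twist $\ell$, it suffices to verify the vanishing for the untwisted summands, i.e. to take $\ell=0$.

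For necessity I would assume $E$ is a direct sum of the listed line bundles and compute summand by summand. Each summand is $\OO_X(a,b)$ with $(a,b)$ ranging over $\{(0,0),(0,1),\dots,(0,r_1),(1,0),\dots,(r_2,0)\}$, so by the Künneth Lemma above $H^i(\OO_X(c,d))=\bigoplus_{p+q=i}H^p(\PP^{n_1},\OO(c))\otimes H^q(\PP^{n_2},\OO(d))$ with $c=a+j_1+t$, $d=b+j_2+t$. On $\PP^n$ the only nonzero cohomologies are $H^0(\OO(c))$ for $c\geq0$ and $H^{n}(\OO(c))$ for $c\leq-n-1$, so in the range $1\leq i\leq n_1+n_2-1$ the group $H^i$ survives only for $i=n_1$ (first factor in top degree) or $i=n_2$ (second factor in top degree). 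Substituting the allowed $(a,b)$ and the constraints $-n_1\leq j_1\leq0$, $-n_2\leq j_2\leq0$, $-i\leq j_1+j_2\leq0$, I would check that the surviving contributions occur exactly on the two exceptional loci $i=n_1,\ j_2\geq j_1+n_1-r_1+1$ and $i=n_2,\ j_2\leq j_1-n_2+r_2-1$. This is bookkeeping once the $\PP^n$ vanishing pattern is in hand.

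For sufficiency I would induct on $n_1+n_2$. Restricting to a general hyperplane $H_1\cong\PP^{n_1-1}\subset\PP^{n_1}$ and writing $Y=H_1\times\PP^{n_2}$, the sequence
\[0\to E(-1,0)\to E\to E|_{Y}\to0\]
together with its second-factor analogue lets me transfer the vanishing hypotheses from $E$ to $E|_{Y}$ (with $n_1$ replaced by $n_1-1$ and the parameters shifted) by chasing the long exact sequence. The base case is $n_1=1$ (or $n_2=1$): on $\PP^1\times\PP^{n_2}$ every bundle restricts on each $\PP^1$-fibre to a sum of line bundles by Grothendieck's theorem, the hypotheses force the splitting type to be constant, and what remains is a Horrocks-type condition along the $\PP^{n_2}$-factor, pinning $E$ down to a direct sum of the prescribed line bundles. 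To pass from ``$E|_{Y}$ splits'' to ``$E$ splits'' I would use the $0$-regularity notion of the Preliminaries: at the least diagonal twist where $E$ becomes $0$-regular it is globally generated, so a general section produces a line-subbundle inclusion $\OO_X(a,b)\hookrightarrow E$ with $(a,b)$ in the allowed list, and the quotient extension splits because the obstruction $\mathrm{Ext}^1(E/\OO_X(a,b),\OO_X(a,b))\cong H^1$ of a twist of the dual vanishes under the hypotheses. Inducting on $\rk E$ then finishes.

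The hard part will be the bookkeeping in the inductive step: verifying that the two exceptional loci are exactly preserved under hyperplane restriction, and that the line bundle split off at each stage is forced into the range $\{(0,0),(0,b)_{b\leq r_1},(a,0)_{a\leq r_2}\}$ rather than a wider one. This is precisely where the asymmetric bounds $n_1-r_1+1$ and $n_2-r_2+1$ in the exceptional cases are tuned, and matching them against the Künneth vanishing pattern at each restriction is the crux of the argument.
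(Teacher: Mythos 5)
Your necessity computation is correct, but the sufficiency half of your plan contains a step that actually fails, not merely ``hard bookkeeping''. The transfer of hypotheses through the restriction sequence $0\to E(-1,0)\to E\to E|_Y\to 0$ does not go through: the connecting homomorphism sends $H^i(E|_Y(j_1+t,j_2+t))$ into $H^{i+1}(E(j_1-1+t,j_2+t))$, and the shifted index $(i+1,\,j_1-1,\,j_2)$ can land \emph{inside} the exceptional loci, where the theorem permits nonvanishing. Concretely, take $r_2=n_2\geq 3$, $i=n_2-1$, $(j_1,j_2)=(0,-2)$: the connecting map lands in $H^{n_2}(E(-1+t,-2+t))$, and since $j_2=-2\leq (j_1-1)-n_2+r_2-1$ this group is allowed to be nonzero by the second exceptional case, so you cannot conclude $H^{n_2-1}(E|_Y(t,-2+t))=0$ --- yet your inductive hypothesis on $Y$ demands exactly this, because on $Y$ the exceptional degrees are $n_1-1$ and $n_2$, and one checks $(0,-2)$ is not in the $Y$-exceptional locus in degree $n_1-1$ either. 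Separately, the extremal case $r_1=n_1$ is not covered downstairs at all: the theorem on $Y=H_1\times\PP^{n_2}$ requires the second-factor twists to be bounded by $n_1-1$, so the summand $\OO_X(0,n_1)$, which the original list permits, falls outside the inductive statement. So the induction on $n_1+n_2$ cannot be set up as described.

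The second gap is your splitting mechanism. Global generation of $E(t,t)$ at the minimal $0$-regular diagonal twist yields maps $\OO_X\to E(t,t)$, not a line-subbundle inclusion: a general section of a globally generated bundle of rank $\leq\dim X$ vanishes in codimension equal to the rank, so $E/\OO_X(a,b)$ need not be locally free, the identification $\mathrm{Ext}^1(E/\OO_X(a,b),\OO_X(a,b))\cong H^1(\cdot)$ is unjustified, and in any case the needed $H^1$-vanishing is, via Serre duality, an $H^{n_1+n_2-1}$-statement about $E$ at twists lying partly outside the window $-n_l\leq j_l\leq 0$, $-i\leq j_1+j_2\leq 0$ and partly inside the exceptional region, so it is not among the hypotheses. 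Note also that the paper itself gives no proof of this statement --- its proof is literally the citation to Miyazaki \cite{15} --- and the method behind that citation, which the paper does execute when proving its Theorems 1.2 and 1.3, is quite different from your outline: take the minimal $t$ with $E(t,t)$ $0$-regular; turn a surviving class $s$ in $H^{n_1}$ (or in $H^{n_1+n_2}$, for the trivial summand) into a cosection $f$ by Serre duality; use Koszul-complex chases, fed by exactly the hypothesized vanishings, to build a surjection from $H^0(E(\ast))$ onto $H^{n_1}(E(\ast))$ and hence a section $g$ mapping to $s$; and conclude $f\circ g\neq 0$ from the nondegeneracy of the K\"unneth--Serre pairing into $H^{n_1+n_2}(\OO_X(-n_1-1,-n_2-1))\cong k$, so a line bundle splits off and one inducts on rank, with no induction on dimension at all. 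Your proposal is missing this cosection-and-pairing engine, and the two failures above are precisely why the actual argument avoids hyperplane induction.
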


\begin{proof}Theorem 2.5\cite{15}\end{proof}

\begin{lemma}[Proposition 5.2 \cite{16}]
 Let $E$ be a vector bundle on $X=\PP^{n}\times\PP^{n}$ then the following are equivalent:
\begin{enumerate}
\item For any integers $\ell_1,\ell_2,\ell$
    \begin{enumerate}
    \renewcommand{\theenumi}{\alph{enumi}}
    \item $H^i(X,E(\ell_1,\ell_2)) = 0$ with $|\ell_1-\ell_2\leq n$ for $i=1,\dots,n-1,n+1,\ldots,2n-1,2n+1$
    \item $H^n(X,E(\ell,\ell)) = 0$
    \end{enumerate}
\item The vector bundle $E$  is isomorphic to a direct sum of line bundles of the form $\mathcal{O}_X(u,v)$ for some
$u,v$ with $|u-v|\leq n$
\end{enumerate}
\end{lemma}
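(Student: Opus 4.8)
The plan is to treat the two implications separately, deriving the substantive direction $(1)\Rightarrow(2)$ from Theorem~2.5 of \cite{15} (which is available to us, having been stated above) and proving $(2)\Rightarrow(1)$ directly from the K\"unneth formula, since $(1)$ is a formally stronger list of vanishings than the hypothesis of that theorem and therefore cannot be recovered from it.

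First I would record the key observation that the present Lemma is the diagonal, maximal-range specialization of Theorem~2.5 of \cite{15}. Setting $n_1=n_2=n$ and $r_1=r_2=n$, the admissible summands $\OO_X$, $\OO_X(0,1),\ldots,\OO_X(0,n)$, $\OO_X(1,0),\ldots,\OO_X(n,0)$ twisted by $\OO_X(\ell,\ell)$ are precisely the line bundles $\OO_X(u,v)$ with $0\le v-u\le n$ or $0\le u-v\le n$, that is, with $|u-v|\le n$. Thus the splitting described by Theorem~2.5 of \cite{15} at $r_1=r_2=n$ coincides verbatim with condition $(2)$.

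Next, for $(1)\Rightarrow(2)$, I would verify that condition $(1)$ implies the cohomological hypothesis of Theorem~2.5 of \cite{15} at $r_1=r_2=n$. With these values the exceptional positions of that theorem become $i=n$ together with $j_2\ge j_1+1$ or $j_2\le j_1-1$; that is, at the middle degree $i=n$ only the off-diagonal twists are exempt, and the only $H^n$-vanishing actually demanded is on the diagonal $j_1=j_2$. Condition $(1a)$ supplies vanishing of $H^i(E(\ell_1,\ell_2))$ for every $i\in\{1,\ldots,2n-1\}\setminus\{n\}$ and every pair with $|\ell_1-\ell_2|\le n$, a superset of the sub-box required there, while condition $(1b)$ supplies the diagonal $H^n$-vanishing; hence the hypothesis of Theorem~2.5 of \cite{15} holds and $E$ splits as in $(2)$. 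I would note in passing that $(1)$ is strictly stronger than that hypothesis---for instance it forces $H^1(E(\ell_1,\ell_2))=0$ even when $\ell_1-\ell_2=n$, which Theorem~2.5 does not require---so the converse genuinely must be argued independently.

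For $(2)\Rightarrow(1)$ I would compute directly. By additivity it suffices to treat a single summand $\OO_X(u,v)$ with $|u-v|\le n$, and by the K\"unneth isomorphism of the preceding Lemma one has $H^i(\OO_X(u+\ell_1,v+\ell_2))\cong\bigoplus_{p+q=i}H^p(\PP^n,\OO(u+\ell_1))\otimes H^q(\PP^n,\OO(v+\ell_2))$. Since each factor has cohomology only in degrees $0$ and $n$, the total degree $i$ can be nonzero only for $i\in\{0,n,2n\}$; this already yields $(1a)$ for every $i\in\{1,\ldots,2n-1\}\setminus\{n\}$ and for $i=2n+1$ (the latter vanishing for dimension reasons). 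For $(1b)$, on a diagonal twist the degree-$n$ part is $H^0(\OO(u+\ell))\otimes H^n(\OO(v+\ell))\oplus H^n(\OO(u+\ell))\otimes H^0(\OO(v+\ell))$; a nonzero first summand forces $u+\ell\ge0$ and $v+\ell\le -n-1$, hence $u-v\ge n+1$, and symmetrically a nonzero second summand forces $v-u\ge n+1$, both excluded by $|u-v|\le n$, so $H^n(\OO_X(u+\ell,v+\ell))=0$. The only care needed is bookkeeping---matching the two descriptions of the admissible line bundles and matching the exceptional positions of Theorem~2.5 of \cite{15} with the single diagonal condition $(1b)$---and there is no serious analytic obstacle: the content of the Lemma is carried by Theorem~2.5 of \cite{15}, the direct K\"unneth computation serving only to recover the stronger full vanishing asserted in $(1)$.
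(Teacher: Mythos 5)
Your argument is correct, but it takes a genuinely different route from the paper, which offers no proof of this Lemma at all: it is simply attributed to Proposition 5.2 of \cite{16}, where Miyazaki obtains it by syzygy-theoretic, local-cohomology arguments over the bigraded coordinate ring (this is also why the vacuous-looking index $i=2n+1$ appears in the statement -- it is the shadow of a meaningful local cohomology degree, which at the sheaf level you rightly dismiss for dimension reasons). Your route instead makes the Lemma a corollary of Theorem 2.5 of \cite{15} specialized at $n_1=n_2=n$, $r_1=r_2=n$, and your bookkeeping is sound: since $-n\le j_1,j_2\le 0$ forces $|j_1-j_2|\le n$, every twist demanded by the hypothesis of Theorem 2.5 with $i\ne n$ lies inside the region covered by (1a); at $i=n$ the exceptions $j_2\ge j_1+1$ and $j_2\le j_1-1$ leave exactly the diagonal, which is (1b); and the admissible summands at $r_1=r_2=n$ are precisely the line bundles $\OO_X(u,v)$ with $|u-v|\le n$. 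You are also right on the key structural point that (1) is strictly stronger than the hypothesis of Theorem 2.5 (for instance it demands $H^1(E(\ell_1,\ell_2))=0$ when $\ell_1-\ell_2=n$, which that theorem never asks for), so the implication $(2)\Rightarrow(1)$ cannot be read off from the ``only if'' direction and must be computed separately; your K\"unneth computation does this completely, including the correct observation that $i=2n$ must stay excluded because $H^n\otimes H^n$ genuinely survives for deep twists. What your approach buys is a self-contained derivation within the toolkit the paper already states (Theorem 2.5 of \cite{15} plus the K\"unneth formula of Lemma 2.4), at the cost of being conditional on Theorem 2.5, which the paper likewise only cites, and of losing the intrinsic module-theoretic formulation of \cite{16} in which the index $i=2n+1$ is not vacuous.
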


\section{Proof of the Main Results}

\noindent We give the proof of Theorem 1.2.

\begin{proof}
We first prove the "only if" part.\\
Consider the vanishing of the cohomologies
\[H^i(\OO_X(a_1,\ldots,a_s)\otimes\OO_X(j_1+t+a_1,\ldots,j_s+t+a_s))\neq0\] for
\begin{enumerate}
 \item $(0,\dots,0)$
  \item $(0,\ldots,0,1),(0,\ldots,1,0),\ldots,(1,0,\ldots,0)$
  \item $(0,\ldots,0,2),(0,\ldots,2,0),\ldots,(2,0,\ldots,0)$ 
\end{enumerate}
Specifically,
\[H^{n_1}(\OO_X(j_1+t+a_1,\dots,j_s+t+a_s))\neq0\] if and only if $j_1+t+a_1\leq-n_1-1$ and $j_k+t+a_k\geq0$ for $k=2,\ldots,s$.
i.e. if and only if $-j_k-a_k\leq t\leq-n_1-j_1-a_1-1$ for $k=2,\ldots,s$, similarly
\[H^{n_2}(\OO_X(j_1+t+a_1,\dots,j_s+t+a_s))\neq0\] if and only if $j_2+t+a_2\leq-n_2-1$ and $j_k+t+a_k\geq0$ for $k=1,3,4,\ldots,s$.
i.e. if and only if $-j_k-a_k\leq t\leq-n_2-j_2-a_2-1$ for $k=1,3,4,\ldots,s$, 
similarly for all $k=1,2,\ldots,n-1$, 
\[H^{n_s}(\OO_X(j_1+t+a_1,\dots,j_s+t+a_s))\neq0\] if and only if $j_s+t+a_s\leq-n_s-1$ and $j_k+t+a_k\geq0$.
i.e. if and only if $-j_k-a_k\leq t\leq-n_1-j_1-a_1-1$. \\
Thus we get 
$j_k-n_k+a_{k-1}-a_k\leq j_k\leq j_{k-1}+n_{k-1}+a_{k-1}-a_k$ 
\begin{enumerate}
 \item $(0,\dots,0)$
  \item $(0,\ldots,0,1),(0,\ldots,1,0),\ldots,(1,0,\ldots,0)$
  \item $(0,\ldots,0,2),(0,\ldots,2,0),\ldots,(2,0,\ldots,0)$ 
\end{enumerate} all the cohomologies vanish. Thus the ``$\Longleftarrow$" part is true

\vspace{0.5cm}

\noindent We now prove the ``$\Longrightarrow$" part.\\
Let $n=n_+\cdots+n_s$ and suppose that
\[H^n(E(-n_1+t-1,\ldots,-n_s+t-1))\neq0\] then we have that \[H^0(E^{\vee}(-t,\ldots,-t))\neq0\] by Serre's duality and this gives a nonzero map
$E(t,\dots,t)\lra\OO_X$. Thus $(t,\dots,t)-$regularity of $E$ implies that $E(t,\dots,t)$ is globally generated.
Thus we have a nonzero map
\[\oplus\OO_X\lra E(t,\ldots,t)\lra\OO_X\] which is surjective and split and so $\OO_X$ is a direct summand.

Let $n=n_1+\dots+n_s$ and assume that 
\[H^n(E(-n_1+t-1,\ldots,-n_s+t-1))=0\]
we shall focus the case of the nonvanishing of the $n_1^{-th}$ cohomology of $E$ with some twist, for the other cases $n_2,\dots,n_s$ are
proved similarly.
Since $E$ is $(t,\ldots,t)$ regular but not $(t-1,\ldots,t-1)$ regular:\\

\noindent We deal with the follwing cases:
\begin{enumerate}
 \item $H^{n_1}(E(-n_1+t,t-1,\cdots,t-1))\neq0$
 \item $H^{n_1}(E(-n_1+t-1,t-2,\cdots,t-2))\neq0$
 \item $H^{n_1}(E(-n_1+t,t-1,\cdots,t-1))=H^{n_1}(E(-n_1+t,t-2,\cdots,t-2))=0$ but  $H^{n_1}(E(-n_1+t-1,t-1,\cdots,t-1))\neq0$
\end{enumerate}

For 1) there exists $s\in H^{n_1}(-n_1+t,t-1,\cdots,t-1)$. Let $R_a$ be the polynomial ring in $a+1$ variables over $k$ for $a=,1,\cdots,s$.
Take the Koszul complex:
\[K_{a_\bullet}:0\rightarrow F_{a,n_a+1}\rightarrow F_{a,n_a}\rightarrow\cdots\rightarrow  F_{a,r}\rightarrow \cdots\rightarrow F_{a,1}\rightarrow F_{a,0}\rightarrow0\]
where $F_{a,r}$  is a direct sum of ${n_a+1}C_r$ copies of $R_a(-r)$ for $a=1,\cdots,s$.\\
Consider the exact sequence $p_1^*(\overline{K_{1\bullet}})\otimes E(t+1,t-1,\cdots,t-1)$

\[0\lra E(-n_1+t,t-1,\cdots,t-1)\lra E(-n_1+t+1,t-1,\cdots,t-1)^{\oplus{n_1+1}}\]

\[\lra\cdots\lra E(t-r+1,t-1,\cdots,t-1)^{\oplus{n_1+1C_r}}\]

\[\lra\cdots\lra E(t,t-1,\cdots,t-1)^{\oplus{n_1+1}}\lra E(t+1,t-1,\cdots,t-1)\lra0\]

Now for us to construct a surjective morphism

$\phi: H^0(E(t+1,t-1,\cdots,t-1)) \lra H^{n_1}(E(-n_1+t,t-1,\cdots,t-1))$

We need to show that $H^i(E(-n_1+t,t-1,\cdots,t-1)) = 0$ for $i=1,\cdots,n_1$.

We have $H^i(E(t-i+1,t-1,\cdots,t-1)) = H^i(E(-i+1,-1,\cdots,-1)\otimes\OO_X(t,\cdots,t)) = 0$

since $-i\leq(-i+1)+(-1)\leq0$, $-n_k\leq-1\leq0$.

Thus there exists a nonzero element $g\in H^0(E(t+1,t-1,\cdots,t-1))$ such that 

$\phi(g) = s (\neq0) \in H^{n_1}(E(-n_1+t,t-1,\cdots,t-1))$.\\

\noindent Consider the exact sequence
\[p_2^*(\overline{K_{2\bullet}})\otimes E(-t-1,-t+1,\cdots,-t-1)\] that is

\[0\lra E^{\vee}(-t-1,-n_2-t,\cdots,-t-1)\lra E^{\vee}(-t-1,-n_2-t+1,\cdots,-t-1)^{\oplus{n_2+1}}\lra\dots\lra\]

\[E^{\vee}(-t-1,-t-r+1,\cdots,-t-1)^{\oplus{n_2+1C_r}}\lra \cdots\lra E^{\vee}(-t-1,-t,\cdots,-t-1)^{\oplus{n_2+1}}\lra\] \\
\[\lra E^{\vee}(-t-1,-t+1,\cdots,-t-1)\lra0\]

To construct a surjective map

\[\phi:H^0(E^{\vee}(-t-1,-t+1,\cdots,-t-1))\lra H^{n_2}(E^{\vee}(-t-1,-n_2-t,\cdots,-t-1))\] 
we show that  
\[H^i(E^{\vee}(-t-1,-t-i+1,\cdots,-t-1))=0\] for $i=1,\cdots,n_2$ which is equivalent to 
$H^i(E(-n_1+t,n_1+t-i-2,\cdots,-n_1+t))=0$ for $i=n_1,\cdots,n_1+\cdots+n_s-1$ from Serre's duality.\\

\noindent In fact we have
\[H^i(E(-n_1+t,n_1+t-2-i,\dots,-n_1+t)) = \]
\[H^i(E(-n_1+1,n_1-i-1,\dots,-n_1+1)\otimes\OO_X(t-1,t-1,\dots,t-1))=0\]
because  $-i\leq(-n_1+1)+(n_1-i-1)\leq0$, $-n_k\leq n_1-i-1\leq0$.\\

\noindent Now by taking the dual element $s^*\in H^{n_2}(E^{\vee}(-t-1,-n_2-t,\dots,-t-1))$ which corresponds to 

$s^*\in H^{n_1}(E((-n_1+1,t-1,\dots,-t-1))$ we have a nonzero element $f\in H^0(E^{\vee}(-t-1,-t+1,\dots,-t-1))$ such that 
$\psi(f)=s^*(\neq0)\in H^{n_2}(-t-1,-n_2-t,\cdots,-t-1))$.\\
The elements $g$ and $f$ can be viewed as \\
$g\in\Hom(\OO_X(0,\dots,0,2),E(t+1,\dots,t+1))$ and $f\in\Hom(E(t+1,\dots,t+1),\OO_X(0,\dots,0,2))$\\
Now consider the commutative diagram:

\[
\begin{CD}
A@>>>B\\
@VVV@VVV\\
D@>>>C
\end{CD}
\]
where $A=H^0(E(t+1,t-1,\dots,t-1)\otimes H^0(-t-1,-t+1,\dots,-t-1))$,\\
$B=H^0(\OO_X)$,\\
$C={H^{n_1+n_k}(\OO_X(-n_1-1,\dots,,-n_k-1))}$,\\
$D=H^{n_1}(E(t-n_1,t-1,\dots,t-1))\otimes H^{n_k}(E^{\vee}(-t-1,-t-1,\dots,-t-n_k,\dots,-t-1)$
the maps from $A$ to $D$ and $B$ to $C$ give the canonical element. Therefore, $f\circ g$ is an isomorphism and hence 
$\OO_X(0,\cdots,0,2)$ is direct summand of $E(t+1,\dots,t+1)$.\\

\noindent Now for 2), there exists $s(\neq0)\in H^{n_1}(-n_1+t-1,t-2,\dots, t-2)$. Take the corresponding dual maps 
$s^*\in  H^{n_k}(E^{\vee}(-t,\dots,-n_k-t+1,\dots,-t))$ and by Serre duality we have the surjections 
\[\psi:H^0(E(t,\cdots,t-2\dots,t))\]
\[
\begin{CD}
@.@V^{surjects}VV\\
@.H^{n_1}(E(-n_1+t-1,t-2,\dots,t-2))\end{CD}
\]

and

\[\phi:H^0(E^{\vee}(-t,\cdots,-t+2,\dots,-t))\]
\[
\begin{CD}
@.@V^{surjects}VV\\
@.H^{n_k}(E^{\vee}(-t,\cdots,-n_k-t+1,-t,\dots,-t))\end{CD}
\]

\noindent Similarly by Kozsul complex we prove that 
$H^i(E(t-i,t-2,\dots,t-2))=0$ for $1\leq i\leq n_1$ and 

$H^i(E(-n_1+t-1,-n_2+t-1,\dots,n_s+t-i-s+1))=0$ for $1\leq i\leq n_1+\dots+n_s-1$.

We see that 
\[H^i(E(t-i,t-2,\dots,t-2))= H^i(E(-i+1,-1,\cdots,-1)\otimes\OO_X(t-1,\cdots,t-1))=0\] by assumption since
$-i\leq(-i+1)+(-1)\leq 0$, $-n_1\leq-i+1\leq0$ and $-n_k\leq-1\leq0$.

\noindent On the other hand we have 
\[H^i(E(-n_1+t-1,n_1+t-i-3,\dots,-n_1+t-1))=\]

\[H^i(E(-n_1+1,n_1-i-1,\dots,-n_1+1)\otimes\OO_X(t-2,\cdots,t-2))=0\] by assumption since
$-i\leq(-n_1+1)+(n_1-i-1)\leq 0$, $-n_1\leq-n_1+1\leq0$ and $-n_k\leq n_1-i-1\leq0$.\\

\noindent The maps $g\in H^0(E(t,\dots,t,t-2))$ and $f\in H^0(E^{\vee}(-t,\dots,-t,-t+2))$ such that
$\phi(g)=s$ and $\psi(f)=s^*$ give a splitting map $\OO_X(-t,\dots,-t,-t+2)\lra E$ and hence $\OO_X(0,\dots,0,2)$ is a direct summand of
$E(t,\dots,t)$.\\
\noindent Lastly the last case, 3): There exists a non-zero map $s\in H^{n_1}(E(-n_1+t-1,t-1,\dots,t-1))$. To construct

\[\psi:H^0(E(t,t-1\cdots,t-1))\]
\[
\begin{CD}
@.@V^{surjects}VV\\
@.H^{n_1}(E(-n_1+t,t-1\cdots,t-1))\end{CD}
\]
we show that  $H^i(E(t-i,t-1,\dots,t-1))=0$ for $i=1,\dots,n_1$. We have that

\[H^i(E(t-i,t-1,\dots,t-1)) = H^i(E(-i+1,0,\dots,0)\otimes\OO_X(t-1,\dots,t-1))=0\] for $i\neq n_1$
since  $-i\leq -i+1\leq 0$, $-n_1\leq -i+1\leq0$ and $-n_k\leq0$.\\

\noindent Also we have that $H^{n_1}(E(-n_1+t,t-1,\dots,t-1))=0$.

There exists a nonzero element $g\in H^0(t,t-1,\dots,t-1)$ such that $\psi(g)=s(\neq0)\in H^{n_1}(-n_1+t-1,t-1,\dots,t-1))$, likewise
to construct
\[\phi:H^0(E^{\vee}(-t,-t+1\cdots,-t,-t))\]
\[
\begin{CD}
@.@V^{surjects}VV\\
@.H^{n_k}(E(-t,\dots,-t,-t-n_k\cdots,-t))\end{CD}
\]
we show that $H^i(E(t-n_1-1,t+n_1-2-i,\dots,t-n_1-1))=0$ for $i=n_1,\dots,n_!+\dots+n_s-1$. Now we have 

\[H^i(E(-n_1+t-1,n_1+t-2-i,\dots,-n_1+t-1) =\]

\[H^i(E(-n_1+1,n_1-i,\dots,-n_1+1)\otimes\OO_X(t-2,\dots,t-2))=0\]
for $i\neq n_1$ since  $-i\leq(-n_1+1)+(n_1-i)\leq0$, $-n_1\leq-n_1+1\leq0$ and $-n_k\leq n_1-i\leq0$.
Lastly we see that $H^{n_1}(E(t-n_1-1,t-2,\dots,t-2))=0$.\\

\noindent Next, taking the dual element $s^*\in H^{n_k}(E^{\vee}(-t,\dots,-t,-t-n_k,-t,\dots,-t))$ corresponding to
$s\in H^{n_1}(E(t-n_1-1,t-1,\dots,t-1))$,there exists a nonzero element\\
$f\in H^0(E^{\vee}(-t,-t+1,\dots,-t))$ such that
$\phi(f)=s^*(\neq0)\in H^{n_k}(E^{\vee}(-t,\dots,-t,-t-n_k,-t,\dots,-t))$. \\

\noindent The elements $g$ and $f$ can be viewed as elements of  $\Hom(\OO_X(0,\dots,0,1),E(t,\dots,t))$ and $\Hom(E(t,\dots,t),\OO_X(0,\dots,0,1))$
respectively and so $f\circ g$ is an isomorphism hence $\OO_X(0,\dots,0,1)$ is a direct summnad of $E(t,\dots,t)$
\end{proof}

\vspace{1cm}

\noindent We give the proof of Theorem 1.3 which is very similar to the proof of theorem 1.2 above.

\begin{proof}
The proof for the ' 'only if" part is can be shown easily by computing the vanishing of cohomologies.\\

\noindent The proof for the ''if" part is very similar to Theorem 1.2. Take a minimal integer $t$ such that 
$E(t,\dots,t)$ is 0-regular. For the case where 
\[H^n(E(-n_1+t-1,\dots,-n_s+t-1))\neq0\] $\OO_X$ is direct summand of $E(t,\dots,t)$ is in the previous theorem,
particularly, for $r_1=0$ it follows.\\

\noindent Now assuming 
\[H^n(E(-n_1+t-1,\dots,-n_s+t-1))=0\], possible nonvanishing parts for the nonzero regularity of $E(t-1,\dots,t-1)$ appear in
the $n_i$-th cohomologies.

\end{proof}

\noindent \textbf{Data Availability statement}
My manuscript has no associate data.

\noindent \textbf{Conflict of interest}
On behalf of all authors, the corresponding author states that there is no conflict of interest.

\newpage

\end{document}